\def\newaliasedtheorem#1[#2]#3{
	\newaliascnt{#1@alt}{#2}
	\newtheorem{#1}[#1@alt]{#3}
	\expandafter\newcommand\csname #1@altname\endcsname{#3}
}
\numberwithin{equation}{section}
\newtheoremstyle{slanted}{\topsep}{\topsep}{\slshape}{}{\bfseries}{.}{.5em}{}
\theoremstyle{plain}
\newtheorem{theorem}{Theorem}[section]
\theoremstyle{definition}
\theoremstyle{remark}
\newcommand{\setR}{\mathbb{R}}
\newcommand{\eps}{\varepsilon}
\let\altphi\phi
\let\phi\varphi
\let\varphi\altphi
\let\altphi\undefined
\newcommand{\abs}[1]{\left\lvert#1\right\rvert}
\newcommand{\ft}{\mathrm{t}}
\newcommand{\di}{\mathop{}\!\mathrm{d}}
\newcommand{\loc}{{\rm loc}}
\DeclareMathOperator{\Lip}{Lip}
\DeclareMathOperator{\Cb}{C_b}
\newcommand{\haus}{\mathscr{H}}
\newcommand{\dist}{\mathsf{d}}
\newcommand{\meas}{\mathfrak{m}}
\newcommand{\Test}{{\rm Test}}
\DeclareMathOperator{\CD}{CD}
\DeclareMathOperator{\RCD}{RCD}
\newfont{\tmpf}{cmsy10 scaled 2500}
\DeclareMathOperator*{\essinf}{ess\,inf}
\def\XXint#1#2#3{{\setbox0=\hbox{$#1{#2#3}{\int}$ }
		\vcenter{\hbox{$#2#3$ }}\kern-.6\wd0}}
\begin{document}

\title{On the notion of Laplacian bounds on $\RCD$ spaces and applications}

\author{Nicola Gigli}
\address{SISSA, Via Bonomea 265, 34136 Trieste,  Italy}
\email{ngigli@sissa.it}

\author{Andrea Mondino}
\address{Mathematical Institute, University of Oxford,  Woodstock Rd, Oxford OX2 6GG, UK}
\email{andrea.mondino@maths.ox.ac.uk}

\author{Daniele Semola}
\address{FIM-ETH, R\"amistrasse 101, 8092 Z\"urich, Switzerland}
\email{daniele.semola@math.ethz.ch}

\maketitle

\begin{abstract}
We show that several different interpretations of the inequality $\Delta f\leq\eta$ are equivalent in the setting of $\RCD(K,N)$ spaces. With respect to previously available results in this direction, we improve both  on  the generality of the underlying space and in terms of regularity to be assumed on the function $f$.
Applications are presented.
\end{abstract}

\section{Introduction}

A recurring theme in modern analysis is that of finding appropriate weak interpretations of differential (in)equalities, so that they can be given some useful meaning even if the function being differentiated has no derivatives in the classical sense. We mention here \cite{Calabi58,CheegerGromoll72} and \cite{Wu79} as classical instances of this idea in geometric analysis under lower Ricci curvature bounds, very much related to the subject of the present note.
This is even more important when the underlying geometry is itself non-smooth as in such framework the concept of `classical derivative' typically does not exist at all. In this note we are concerned with the inequality $\Delta f\leq \eta$ in the setting of finite dimensional $\RCD$ spaces and prove, under the sole assumption that $f$ is bounded and lower semicontinuous,  that the following interpretations of such inequality are equivalent:
\begin{itemize}
\item[1)] In the sense of distributions, see \autoref{def:distributions} and \autoref{def:distributions2};
\item[2)] In the sense of comparison with solutions of the associated Poisson equation, see \autoref{def:classicalsupersolution};
\item[3)] In the viscosity sense, see \autoref{def:viscosity};
\item[4)] In the sense of asymptotic for the heat flow, see \autoref{def:heatlaplbounds}.
\end{itemize}
Let us briefly recall the context. The class of $\RCD(K,N)$ spaces consists of metric measure spaces that, in a suitable weak sense, have Ricci curvature $\geq K$ and dimension $\leq N$. It has been introduced in \cite{AmbrosioGigliSavare11-2,Gigli12} as a `Riemannian' counterpart of the original Curvature Dimension condition $\CD(K,N)$ considered in \cite{Lott-Villani09},  \cite{Sturm06I,Sturm06II} (see also \cite{AmbrosioGigliMondinoRajala12, Erbar-Kuwada-Sturm13, AmbrosioMondinoSavare13, CavMil16}). For an overview on the topic and more detailed bibliography we refer to \cite{AmbICM, Villani2017}.

On $\RCD(K,N)$ spaces, the concept of distributional bounds on the Laplacian has been first considered in \cite{Gigli12}, where it was used to state and prove Laplacian comparison estimates for the distance function (see also \cite{CavaMon-APDE2020}). Then in \cite{Gigli-Mondino12} the distributional inequality $\Delta f\leq 0$ on an open set $\Omega$ has been proved to be equivalent to the property of `minimizing the energy among compactly supported non-negative perturbations'. In the recent \cite{MS21}, the notion of Laplacian bound in the viscosity sense has been introduced  in the setting of $\RCD$ spaces and it has been carried out a systematic study of the relations among the various possible such definitions of Laplacian bounds. A main interest in \cite{MS21}, and the motivation for studying viscous bounds, was the study of regularity properties of minimal boundaries in the more restricted class of non-collapsed $\RCD$ spaces (\cite{GDP17} - these are more regular than general $\RCD$ spaces, especially for what concerns properties related to geometric measure theory). It is mostly for this reason that in \cite{MS21} the equivalence among the four concepts above was mainly proven on non-collapsed spaces.

However,  the recent \cite{MS22, G22} are a strong indicator that in fact the equivalence of the above should hold on general $\RCD(K,N)$ spaces: the purpose of this short note is to prove that this is actually the case. While we introduce no radically new idea, the necessary proofs are non-trivial and gather inspiration from a large amount of material recently developed. Also, we pay particular attention in keeping minimal regularity assumptions on the function $f$, thus extending some of the above notions  beyond what was previously known.

As applications, we:
\begin{itemize}
\item[a)] Give an alternative proof of an approximated maximum principle on $\RCD(K,N)$ spaces (see \autoref{thm:apprmaxpr});
\item[b)] Prove sharp Laplacian comparison estimates for the distance from the boundary of a local perimeter minimizer on general $\RCD(K,N)$ spaces. In \cite{MS21} this was obtained on non-collapsed spaces: the generalization is quite straightforward once one has the equivalence of the above (see \autoref{thm:lapdpermin}).
\end{itemize}

We stress that, in the theory of Alexandrov spaces with sectional curvature bounded from below in synthetic sense, a research line aimed at understanding (sub)harmonic functions and their applications was put forward in \cite{Petrunin96,Petrunin03} and further developed in  \cite{ZZ12, ZZ18}. These works have been source of inspiration for the present note and the earlier \cite{MS21,MS22,G22}.
\medskip

We conclude the introduction by mentioning the following question that naturally arose during the development of the present note.

\textbf{Open question:} does the equivalence between Laplacian bounds in the sense of distributions and  in the heat flow sense hold  on general $\RCD(K,\infty)$ metric measure spaces?

\medskip

In order to keep the note short, we assume the reader to be familiar with the basic definitions and concepts from $\RCD$ theory.

\bigskip

\noindent{\bf Acknowledgments} This research has been mostly carried out at the Fields Institute, during the Thematic Program on Nonsmooth Riemannian and Lorentzian Geometry. The authors gratefully acknowledge the warm hospitality and the stimulating atmosphere. 
\\ 
A.\,M.\;received funding from the European Research Council (ERC) under the European Union's Horizon 2020 research and innovation programme, grant agreement No.\;802689 ``CURVATURE'';  for the purpose of Open Access, he has applied a CC BY public copyright licence to any Author Accepted Manuscript (AAM) version arising from this submission.

\section{Some preliminaries and notation}
Throughout the note, $(X,\dist,\meas)$ is an $\RCD(K,N)$ metric measure space, for some $K\in \mathbb{R}$ and $N\in [1,\infty)$. In particular $(X,\dist)$ is a proper metric space and $\meas$ is finite on bounded sets. Given an open subset $\Omega\subset X$, we will denote by $\Cb(\Omega)$ (resp.\;$\Lip_c(\Omega)$) the space of bounded and continuous functions on $\Omega$ (resp.\;the space of Lipschitz functions with compact support contained in $\Omega$).  

\section{Notions of Laplacian bounds and equivalence results}

\begin{definition}[Distributional Laplacian bounds 1]\label{def:distributions}
Let $(X,\dist,\meas)$ be an $\RCD(K,N)$ metric measure space and let $\Omega\subset X$ be an open domain. Let $f\in W^{1,2}_{\loc}(\Omega)$ and $\eta\in\Cb(\Omega)$. We say that $\boldsymbol{\Delta}f\le \eta$ in the sense of distributions if the following holds: for any non-negative function $\phi\in\Lip_c(\Omega)$,
\begin{equation*}
-\int_{\Omega}\nabla f\cdot\nabla \phi\di\meas\le \int_{\Omega}\phi\, \eta\di\meas\, .
\end{equation*}
\end{definition}

More in general, and in analogy with the Euclidean case, it is possible to define Laplacian bounds in the sense of distributions under weaker regularity assumptions on $f$ by ``integrating by parts once more''. Borrowing the notation from \cite{GP19}, for any $\RCD(K,N)$ metric measure space $(X,\dist,\meas)$ we introduce
\begin{equation}
\Test^{\infty}(X):=\{\phi\in D(\Delta)\cap L^{\infty}\, :\, \abs{\nabla \phi}\in L^{\infty}\, ,\Delta\phi\in L^{\infty}\cap W^{1,2} \}
\end{equation}
and, for an open domain $\Omega\subset X$,
\begin{equation}
\Test^{\infty}_{c}(\Omega):=\{\phi\in \Test^{\infty}(X)\, :\, \mathrm{supp}\, \phi\Subset \Omega\}\, .
\end{equation}

\begin{definition}[Distributional Laplacian bounds 2]\label{def:distributions2}
Let $(X,\dist,\meas)$ be an $\RCD(K,N)$ metric measure space and let $\Omega\subset X$ be an open domain. Let $f\in L^1_{\loc}(\Omega)$ and $\eta\in\Cb(\Omega)$. We say that $\boldsymbol{\Delta}f\le \eta$ in the sense of distributions if the following holds: for any non-negative function $\phi\in \Test^{\infty}_{c}(\Omega)$ it holds
\begin{equation}\label{eq:distr3}
\int_{\Omega} f\Delta \phi\di\meas \le \int_{\Omega}\phi\, \eta\di\meas\, .
\end{equation}
\end{definition}

\begin{remark}
If $f\in W^{1,2}_{\loc}(\Omega)$ and $\boldsymbol{\Delta}f\le \eta$ according to \autoref{def:distributions2}, then $\boldsymbol{\Delta}f\le \eta$ also according to \autoref{def:distributions}. The implication can be checked by integrating by parts in \eqref{eq:distr3} and recalling the density of $\Test_c^{\infty}(\Omega)$ into $\Lip_c(\Omega)$.
\end{remark}

\begin{remark}
In the very recent \cite{PengZhangZhu22} a variant of the classical Weyl lemma on Euclidean spaces is obtained for $\RCD(K,N)$ metric measure spaces. Among the corollaries, there is the statement that if $f\in L^{1}_{\loc}(\Omega)$ verifies $\boldsymbol{\Delta}f\le \eta$ and $\boldsymbol{\Delta}f\ge \eta$ according to \autoref{def:distributions2} for some $\eta\in \Cb(\Omega)$ (actually $\eta\in L^{2}_{\loc}(\Omega)$ would be enough), then $f\in W^{1,2}_{\loc}(\Omega)$. See in particular \cite[Corollary 1.5]{PengZhangZhu22}.
\end{remark}

\begin{remark}\label{re:sobreg}
Thanks to the existence of $\Test^{\infty}_{c}(\Omega)$ cut-off functions in  $\RCD(K,N)$ metric measure spaces (these can be constructed by following verbatim the proof of \cite[Lemma 3.1]{Mondino-Naber14} and choosing $f:[0,1] \to [0,1]$ of class $C^3$ in that argument) it is possible to prove that if $f\in L^{\infty}_{\loc}(\Omega)$ satisfies $\boldsymbol{\Delta }f\le \eta$ according to \autoref{def:distributions2}, then $f\in W^{1,2}_{\loc}(\Omega)$. The proof is analogous to the classical one in the Euclidean setting. There are two main steps: an energy estimate and a regularization procedure. The energy estimate builds upon the identity
\begin{equation}\label{eq:idenergy}
\int_{\Omega}\abs{\nabla (\phi f)}^2\di\meas=\int_{\Omega}f^2\abs{\nabla \phi}^2\di\meas+\int_{\Omega}\nabla (\phi^2f)\cdot\nabla f\di\meas\, ,
\end{equation}
valid for any $\phi \in \Lip_c(\Omega)$ and for $f\in W^{1,2}_{\loc}(\Omega)$. In particular, taking a cut-off function $\phi\in \Test^{\infty}_{c}(\Omega)$, the second term at the right hand side in \eqref{eq:idenergy} can be estimated thanks to the distributional Laplacian bound. The regularization is via heat flow, see for instance \cite[Lemma 3.6]{PengZhangZhu22}.
\end{remark}

\begin{proposition}\label{prop:lsc+Lebesgue}
Let $(X,\dist,\meas)$ be an $\RCD(K,N)$ metric measure space, for some $K\in \setR, N\in [1,\infty)$, and let $\Omega\subset X$ be an open domain. Let $f\in W^{1,2}_{\loc}(\Omega)$ and $\eta\in\Cb(\Omega)$. Assume that $\boldsymbol{\Delta}f\le \eta$ in the sense of distributions, then:
\begin{itemize}
\item[i)] $f$ is locally bounded from below and coincides $\meas$-a.e. with its lower semicontinuous essential envelope $f_*$, defined as
\begin{equation}
f_*(x):=\sup_{r>0}\essinf_{B_r(x)}f\, ,\quad \text{for $\meas$-a.e. $x\in \Omega$}\, ;
\end{equation}
\item[ii)] any point $x\in \Omega$ is a Lebesgue point for $f_*$, namely
\begin{equation}\label{eq:Leb}
\lim_{r\to 0}\fint_{B_r(x)\cap \Omega}\abs{f(y)-f_*(x)}\di\meas(y)=0\, ,\quad \text{if}\,\quad   f_*(x)<+\infty\, ,
\end{equation} 
and 
\begin{equation}
\lim_{r\to 0}\fint_{B_r(x)\cap \Omega}f(y)\di\meas(y)=+\infty\, ,\quad \text{if}\, \quad f_*(x)=+\infty\, ;
\end{equation}
\item[iii)] for any $x\in \Omega$ it holds
\begin{equation}\label{eq:Lebheat}
\lim_{t\to 0} P_t\tilde{f}(x)=f_*(x)\, ,
\end{equation}
for any $x\in\Omega$ and for any function $\tilde{f}:X\to\setR$ with polynomial growth and such that $f=\tilde{f}$ in a neighbourhood of $x$.
\end{itemize}
\end{proposition}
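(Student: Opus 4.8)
The plan is to show that, locally, $f$ differs from the Green potential of a non-negative measure by a continuous function, and then to read off items (i)--(iii) from the well-understood pointwise behaviour of such a potential. This uses only that $\RCD(K,N)$ spaces (with $N<\infty$) are proper, doubling, satisfy a Poincaré inequality and carry Gaussian heat-kernel bounds, so that elliptic and parabolic regularity as well as the potential theory of the associated Dirichlet form are available. First I would record that the hypothesis means precisely that the distribution $\mu:=\eta\,\meas-\Delta f$ on $\Omega$, i.e. $\mu(\phi)=\int_\Omega\phi\,\eta\di\meas+\int_\Omega\nabla f\cdot\nabla\phi\di\meas$ for $\phi\in\Lip_c(\Omega)$, is non-negative; comparing an arbitrary $\phi\in\Lip_c(\Omega)$ with a fixed non-negative cut-off and applying the Riesz--Markov theorem (valid since $X$ is locally compact), $\mu$ is represented by a non-negative Radon measure, still written $\mu$, finite on compact subsets of $\Omega$.

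Next I would fix $x_0\in\Omega$ and a ball $B:=B_\rho(x_0)\Subset\Omega$ and decompose $f$ on $B$. Since $f\in W^{1,2}(B)$ and $\eta\in L^\infty(B)$, both $\mu\restr B$ and $\eta\,\meas\restr B$ lie in $W^{-1,2}(B)$ (using the Poincaré inequality for the $\eta$ term), so Lax--Milgram produces $w,v\in W^{1,2}_0(B)$ with $-\Delta w=\mu\restr B$ and $\Delta v=\eta\,\meas\restr B$ in $B$. Then $h:=f-w-v$ is harmonic in $B$ and hence has a locally Lipschitz representative (regularity of harmonic functions on $\RCD(K,N)$ spaces); $v$ has a locally continuous representative by De Giorgi--Nash--Moser; and $w$, the potential of the non-negative measure $\mu\restr B$, admits a representative, e.g. the pointwise increasing limit $\lim_{t\downarrow0}P^B_t w$, which is non-negative, superharmonic and lower semicontinuous on $B$ and satisfies, by standard properties of potentials, $\lim_{r\to0}\fint_{B_r(x)}w=\lim_{t\to0}P^B_t w(x)=w(x)\in[0,+\infty]$ at every $x\in B$. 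With $c:=h+v$ continuous on $B$ we obtain $f=c+w$ $\meas$-a.e. on $B$. I expect this step --- assembling the decomposition with the exact pointwise regularity of each summand in the non-smooth setting --- to be the conceptual heart of the proof.

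From this, items (i) and (ii) follow. Since $g:=c+w$ is a lower semicontinuous representative of $f$ with $g\ge\min_{\overline{B_{\rho/2}(x_0)}}c>-\infty$ on $B_{\rho/2}(x_0)$, $f$ is locally bounded below; and since $f_*$ depends only on the $\meas$-class of $f$ while $\sup_{r>0}\essinf_{B_r(x)}g=c(x)+w(x)=g(x)$ at every $x$ (combining the pointwise behaviour of $c$ and of $w$ recalled above), we get $f_*=g$ on $B$, which gives (i) and the identification $f_*(x_0)=c(x_0)+w(x_0)$. For (ii) I would write $\fint_{B_r(x)}|f-f_*(x)|\le\fint_{B_r(x)}|c-c(x)|+\fint_{B_r(x)}|w-w(x)|$; the first term vanishes as $r\to0$ by continuity of $c$; when $w(x)<\infty$, lower semicontinuity of $w$ forces $\fint_{B_r(x)}(w(x)-w)^+\to0$, and since $\fint_{B_r(x)}(w-w(x))\to0$ this also gives $\fint_{B_r(x)}(w-w(x))^+\to0$, so the second term vanishes too; when $w(x)=+\infty$ one has $f_*(x)=+\infty$ and $\fint_{B_r(x)}f=\fint_{B_r(x)}c+\fint_{B_r(x)}w\to+\infty$ directly.

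Finally, for (iii), fix $x\in\Omega$, let $\tilde f$ have polynomial growth and agree with $f$ on $B_{\delta_0}(x)$, apply the decomposition of the second paragraph on $B=B_\rho(x)$ with $\rho<\delta_0$, and fix $\delta<\rho$. Splitting $P_t\tilde f(x)=\int_X\rho_t(x,\cdot)\tilde f\di\meas$ over $B_\delta(x)$ and its complement: the exterior contribution tends to $0$ as $t\to0$ by the Gaussian upper bound on $\rho_t$, the at most exponential volume growth of $\RCD(K,N)$ spaces and the polynomial growth of $\tilde f$; replacing $\rho_t(x,\cdot)$ by the Dirichlet kernel $\rho^{B_\delta}_t(x,\cdot)$ on $B_\delta(x)$ introduces a further Gaussian-small error; and $P^{B_\delta}_t f(x)=P^{B_\delta}_t c(x)+P^{B_\delta}_t w(x)\to c(x)+w(x)=f_*(x)$ as $t\to0$, using continuity of $c$ at the interior point $x$ and the excessiveness of the non-negative superharmonic function $w$ for the killed semigroup. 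These last estimates are routine but require keeping track of the $\delta$-dependence of all error terms, so that one sends $t\to0$ first; this technical bookkeeping is the only other nontrivial point.
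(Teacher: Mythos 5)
Your proposal is correct in substance, but it follows a genuinely different route from the paper. The paper disposes of (i)--(ii) by observing that a $W^{1,2}_{\loc}$ supersolution of $\Delta f=\eta$ is exactly a superminimizer-type object to which the regularity theory of Gilbarg--Trudinger (Moser iteration / weak Harnack) and its metric-space analogues (Lindqvist, Bj\"orn--Bj\"orn) apply directly, giving local lower boundedness, the lsc representative $f_*$ and the Lebesgue-point property at every point; for (iii) it combines the Lebesgue-point condition of (ii) with a ready-made heat-kernel localization lemma from \cite{MS21} (Lemma 2.54 there), plus a one-line lsc argument when $f_*(x)=+\infty$. You instead first identify $\eta\meas-\boldsymbol{\Delta}f$ with a non-negative Radon measure $\mu$, perform a Riesz-type decomposition $f=h+v+w$ on a ball, with $h$ harmonic (Lipschitz), $v$ a Poisson solution (H\"older by De Giorgi--Nash--Moser) and $w$ the non-negative Green potential of $\mu$, and then read (i)--(iii) off the pointwise theory of the single potential $w$ (lsc, Lebesgue points, excessiveness for the killed semigroup). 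Both arguments ultimately rest on the same core input: the pointwise regularity theory for supersolutions is invoked by the paper directly for $f$, and by you for the reduced object $w$; in that sense the decomposition does not make the hard step cheaper, but it isolates it in a cleaner special case ($\eta=0$, $f\ge0$, $f\in W^{1,2}_0$) and makes (iii) structurally transparent via excessiveness rather than via the imported \cite{MS21} lemma. Your version is longer and more self-contained; the paper's is shorter and leans more on references. The minor technical points you flag (Lax--Milgram applicability, keeping track of the $\delta$-dependence in the Gaussian tail estimate, the use of the killed semigroup at a strictly interior point) are all handled correctly and do not hide gaps.
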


\begin{proof}
Items i) and ii) are well known: the techniques in \cite[Chapter 8]{GT01} based on Moser's iteration can be applied with virtually no modification (see also the proof of \cite[Proposition 10]{Lindqvist-Supersolutions}).
\medskip

In order to obtain iii) when $f_*(x)<\infty$ it is sufficient to combine the Lebesgue point condition \eqref{eq:Leb} with \cite[Lemma 2.54]{MS21}. If $f_*(x)=+\infty$, then by lower semicontinuity for any $M\in\setR$ there exists a neighbourhood $U_M\ni x$ such that $f>M$ a.e. on $U_M$. It follows easily that $\lim_{t\to 0}P_tf(x)\ge M$. Taking the limit for $M\to +\infty$ we obtain that \eqref{eq:Lebheat} holds also in the case $f_*(x)=+\infty$.
\end{proof}

The following definition is a variant of \cite[Definition 14.8]{Bjorn-Bjorn11} (in the previous paper \cite{MS21} these were called `classical supersolutions').

\begin{definition}[Supersolution in the comparison sense]\label{def:classicalsupersolution}
Let $(X,\dist,\meas)$ be an $\RCD(K,N)$ metric measure space and let $\Omega\subset X$ be an open domain. Let $f:\Omega\to\setR$ be a lower semicontinuous function and $\eta\in\Cb(\Omega)$. We say that $f$ is a \textit{supersolution  in the comparison sense} of $\Delta f=\eta$ if the following holds: for any open domain $\Omega'\Subset\Omega$ and for any function $g\in C(\overline{\Omega'})$ such that $\Delta g=\eta$ in $\Omega'$ and $g\le f$ on $\partial\Omega'$ it holds $g\le f$ on $\Omega$.
\end{definition}

\begin{remark}
An easy consequence of the standard existence and regularity for solutions to the Dirichlet problem and of the linearity of the Laplacian is the following: given a continuous function $\eta$ and a lower semicontinuous function $u$, it holds that $u$ is a supersolution in the comparison sense of $\Delta f=\eta$ if and only if, denoting by $v_{\eta}$ a local solution of $\Delta v_{\eta}=\eta$ (possibly on a smaller domain $\Omega'\subset\Omega$), $u-v_{\eta}$ is a supersolution in the comparison sense of $\Delta f=0$ (possibly on a smaller domain $\Omega'\subset\Omega$). 
\end{remark}

\begin{remark}
As discussed for instance in \cite[Remark 9.56]{Bjorn-Bjorn11}, it is not true that a supersolution in the comparison sense of $\Delta f= \eta$ belongs to $W^{1,2}_{\loc}$. Counterexamples can be constructed in the Euclidean space endowed with canonical metric measure structure.
\end{remark}

\begin{remark}\label{re:locsolv} On $\RCD(K,N)$ spaces, $N\in [1,\infty)$, it is easy to establish existence of continuous functions $g$ that are local solutions of $\Delta g=\eta$ for $\eta$ continuous and bounded. For instance, any minimizer of the energy $g\mapsto \int_{U}\tfrac12|\nabla g|^2+g\eta\,\di\meas$ in $W^{1,2}(U)$ subject to some given boundary conditions satisfies $\Delta g=\eta$ in $U$ and is, by Proposition \ref{prop:lsc+Lebesgue}, continuous.
\end{remark}

\begin{definition}[Viscous bounds for the Laplacian]\label{def:viscosity}
Let $(X,\dist,\meas)$ be an $\RCD(K,N)$ metric measure space and let $\Omega\subset X$ be an open and bounded domain. Let $f:\Omega\to\setR$ be   lower semicontinuous and $\eta\in\Cb(\Omega)$. We say that $\Delta f\le \eta$ in the \textit{viscous sense} in $\Omega$ if the following holds. For any $\Omega'\Subset\Omega$ and for any test function $\phi:\Omega'\to\setR$ such that 
\begin{itemize}
\item[(i)] $\phi\in D(\Delta, \Omega')$ and $\Delta\phi$ is continuous on $\Omega'$;
\item[(ii)] for some $x\in \Omega'$ it holds 
$\phi(x)=f(x)$ and $\phi(y)\le f(y)$ for any $y\in\Omega'$, $y\neq x$;
\end{itemize}
it holds
\begin{equation*}
\Delta \phi(x)\le \eta(x)\, .
\end{equation*} 
\end{definition}

\begin{remark}\label{rm:shiftingto0}
An easy consequence of the standard existence and regularity for solutions to the Dirichlet problem and of the linearity of the Laplacian is the following statement: given a continuous function $\eta$ and a lower semicontinuous function $u$, it holds that $\Delta u\le \eta$ in the viscous sense if and only if, denoting by $v_{\eta}$ a local solution of $\Delta v_{\eta}=\eta$ (possibly on a smaller domain $\Omega'\subset\Omega$), it holds that $\Delta (u-v_{\eta})\le 0$ in the viscous sense (possibly on a smaller domain $\Omega'\subset\Omega$). 
\end{remark}

\begin{definition}[Supersolution in the heat flow sense]\label{def:heatlaplbounds}
Let $(X,\dist,\meas)$ be an $\RCD(K,N)$ metric measure space and let $\Omega\subset X$ be an open and bounded domain. Let $f:\Omega\to\setR$ be a bounded lower semicontinuous function  and let $\eta\in\Cb(\Omega)$. We say that $\Delta f\le \eta$ on $\Omega$ in the \textit{heat flow sense} if the following holds.  If we denote by $\tilde{f}:X\to\setR$ the global extension of $f$ such that $\tilde{f}(x):=0$ for any $x\in X\setminus\Omega$, then it holds
\begin{equation*}
\limsup_{t\downarrow 0}\frac{P_t\tilde{f}(x)-\tilde{f}(x)}{t}\le \eta(x)\, ,\quad \text{for every $x\in\Omega$}\, .
\end{equation*}
\end{definition}

\begin{remark}
We notice that the choice of the global extension in \autoref{def:heatlaplbounds} does not play any role in the definition, as soon as the extension has polynomial growth. This is a consequence of \cite[Lemma 2.53]{MS21}, applied to the difference of any two global extensions of $f$ with polynomial growth. 
\end{remark}

\section{Proof of the equivalences}

Let us quote two important results connecting supersolutions (in the comparison sense) of the equation $\Delta f=\eta$ with superminimizers. Under our assumptions, they are direct corollaries of \cite[Corollary 9.6, Theorem 9.24, Theorem 14.10]{Bjorn-Bjorn11} (see also \cite{KinnunenMartio02}), where equivalence of supersolutions with superminimizers of the energy is addressed, and  \cite{Gigli-Mondino12}, establishing the equivalence between the property of being superminimizers for the Dirichlet energy and bounds for the Laplacian in the sense of distributions.  As we already remarked, the extension of the results in \cite{Bjorn-Bjorn11} from the case of superharmonic functions $\Delta f\le 0$ to the case of more general upper Laplacian bounds is harmless. This is due to the linearity of the Laplacian, and the local solvability and regularity for solutions of the Poisson equation on $\RCD(K,N)$ spaces (\autoref{re:locsolv}).

\begin{theorem}\label{thm:supersolsupermin}
Let $(X,\dist,\meas)$ be an $\RCD(K,N)$ metric measure space, for some $K\in \setR, N\in [1,\infty)$, and let $\Omega\subset X$ be an open and bounded domain. Let $f \in W^{1,2}_{\loc}(\Omega)$ and let $\eta\in \Cb(\Omega)$. Assume that $\boldsymbol{\Delta}f\le \eta$ in the sense of distributions. Then $f_*$ is a supersolution in the comparison sense of $\Delta f_*\le \eta$.
\end{theorem}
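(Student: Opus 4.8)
The statement asserts that a distributional supersolution (with $W^{1,2}_{\loc}$ regularity) has a lower semicontinuous representative which is a supersolution in the comparison sense. The plan is to go through the intermediate notion of superminimizer of the Dirichlet energy, for which the relevant potential-theoretic machinery in \cite{Bjorn-Bjorn11} is available once we reduce to the harmonic case $\eta = 0$. As noted in \autoref{rm:shiftingto0} and the remark following \autoref{def:classicalsupersolution}, and recalled in the paragraph preceding the statement, by linearity of the Laplacian and local solvability of the Poisson equation on $\RCD(K,N)$ spaces (\autoref{re:locsolv}, \autoref{prop:lsc+Lebesgue}), it suffices to treat $\eta=0$: subtracting a continuous local solution $v_\eta$ of $\Delta v_\eta = \eta$, the function $f - v_\eta$ is still in $W^{1,2}_{\loc}$, satisfies $\boldsymbol{\Delta}(f-v_\eta) \le 0$ distributionally, and $(f-v_\eta)_* = f_* - v_\eta$ since $v_\eta$ is continuous.

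First I would recall, via \cite{Gigli-Mondino12}, that $\boldsymbol{\Delta}f\le 0$ in the sense of \autoref{def:distributions} is equivalent to $f$ being a superminimizer of the Dirichlet energy on $\Omega$, i.e.\ $\int_{\Omega'} |\nabla f|^2 \le \int_{\Omega'}|\nabla g|^2$ whenever $g = f$ outside some $\Omega' \Subset \Omega$ and $g \ge f$. Then, by \autoref{prop:lsc+Lebesgue}, $f$ coincides $\meas$-a.e.\ with its lower semicontinuous essential envelope $f_*$, so $f_*$ is likewise a superminimizer and, being lower semicontinuous, it is \emph{superharmonic} in the sense of \cite{Bjorn-Bjorn11} — here one uses the characterization of superharmonic functions as lsc superminimizers together with the pointwise identification $f_* = \operatorname{ess\,lim\,inf} f$ guaranteed by item (i) of \autoref{prop:lsc+Lebesgue}. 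The quotation of \cite[Corollary 9.6, Theorem 9.24, Theorem 14.10]{Bjorn-Bjorn11} provides precisely the bridge between the variational (superminimizer) side and the comparison side: a superharmonic function dominates, on any $\Omega'\Subset\Omega$, every $g \in C(\overline{\Omega'})$ that is harmonic in $\Omega'$ with $g \le f_*$ on $\partial\Omega'$.

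The last step is to unwind the reduction: given $\Omega'\Subset\Omega$ and $g\in C(\overline{\Omega'})$ with $\Delta g = \eta$ in $\Omega'$ and $g \le f_*$ on $\partial\Omega'$, apply the harmonic comparison to $g - v_\eta$ (harmonic in $\Omega'$) against $f_* - v_\eta$ (superharmonic), with $g - v_\eta \le f_* - v_\eta$ on $\partial\Omega'$, to conclude $g - v_\eta \le f_* - v_\eta$, hence $g \le f_*$ on $\Omega'$, which is the desired conclusion (note the domain on which the inequality $g\le f$ is required in \autoref{def:classicalsupersolution} is really $\Omega'$, or can be taken so up to shrinking). The main obstacle is bookkeeping rather than conceptual: one must check that the doubling and Poincaré hypotheses underlying \cite{Bjorn-Bjorn11} hold on $\RCD(K,N)$ spaces (they do, being PI spaces), that the quasicontinuity/representative conventions in \cite{Bjorn-Bjorn11} match the essential-envelope representative $f_*$ used here (this is the content of items (i)–(ii) of \autoref{prop:lsc+Lebesgue}), and that the passage $\eta \rightsquigarrow 0$ via $v_\eta$ is legitimate locally — all of which have been arranged by the preliminary results quoted above, so the proof is essentially an assembly of \cite{Bjorn-Bjorn11}, \cite{Gigli-Mondino12}, and \autoref{prop:lsc+Lebesgue}.
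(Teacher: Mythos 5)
Your proposal is correct and follows essentially the same route as the paper: the theorem is presented there as a direct corollary of \cite[Corollary 9.6, Theorem 9.24, Theorem 14.10]{Bjorn-Bjorn11} (supersolution in the comparison sense $\Leftrightarrow$ lsc superminimizer) combined with \cite{Gigli-Mondino12} (superminimizer $\Leftrightarrow$ distributional bound $\boldsymbol{\Delta}f\le 0$), after reducing to $\eta=0$ by subtracting a continuous local solution of the Poisson equation as in \autoref{re:locsolv}. Your caveats about the PI hypotheses and the choice of the essential-envelope representative $f_*$ via \autoref{prop:lsc+Lebesgue} are exactly the bookkeeping the paper implicitly relies on.
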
  

\begin{theorem}\label{thm:fromclassictodistri}
Let $(X,\dist,\meas)$ be an $\RCD(K,N)$ metric measure space,  for some $K\in \setR, N\in [1,\infty)$, and let $\Omega\subset X$ be an open and bounded domain. Let $f\in W^{1,2}_{\loc}(\Omega)$ and assume that $f_{*}=f$ $\meas$-a.e. and $f_*$ is a supersolution in the comparison sense of $\Delta f_*\le \eta$ for some function $\eta\in C_b(\Omega)$. Then $\boldsymbol{\Delta}f\le \eta$ in the sense of distributions.
\end{theorem}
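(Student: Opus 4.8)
The plan is to deduce the statement by composing two known equivalences, now read in the direction opposite to \autoref{thm:supersolsupermin}: the equivalence between comparison supersolutions and energy superminimizers (\cite{Bjorn-Bjorn11,KinnunenMartio02}, as recalled just before \autoref{thm:supersolsupermin}), and the equivalence between superminimizers of the Dirichlet energy and distributional upper bounds for the Laplacian (\cite{Gigli-Mondino12}). Both inputs are available on spaces that are locally doubling and support a local Poincaré inequality, a class to which $\RCD(K,N)$ spaces belong, so the only real work is the function-space bookkeeping that links the two.

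\emph{Step 1: reduction to $\eta\equiv 0$.} Fix a non-negative $\phi\in\Lip_c(\Omega)$; it suffices to prove $-\int_\Omega\nabla f\cdot\nabla\phi\di\meas\le\int_\Omega\phi\,\eta\di\meas$. I would cover $\supp\phi$ by finitely many balls $\Omega_j\Subset\Omega$ on each of which, by \autoref{re:locsolv}, there is a continuous $v_j\in W^{1,2}_{\loc}(\Omega_j)$ with $\Delta v_j=\eta$ on $\Omega_j$; using a Lipschitz partition of unity subordinate to the cover one reduces to the case $\supp\phi\Subset\Omega'$ for a single such ball $\Omega'$, with solution $v_\eta$. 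By the Remark following \autoref{def:classicalsupersolution}, $u:=f_*-v_\eta$ is a lower semicontinuous comparison supersolution of $\Delta u=0$ on $\Omega'$; moreover $u=f-v_\eta\in W^{1,2}_{\loc}(\Omega')$ up to $\meas$-negligible sets, $u$ is finite $\meas$-a.e., and, since $v_\eta$ is continuous, $u$ coincides $\meas$-a.e.\ with its own lower semicontinuous essential envelope. Granting the case $\eta\equiv 0$, which yields $-\int_{\Omega'}\nabla u\cdot\nabla\phi\di\meas\le 0$, and adding $-\int_{\Omega'}\nabla v_\eta\cdot\nabla\phi\di\meas=\int_{\Omega'}\phi\,\eta\di\meas$ (valid because $\Delta v_\eta=\eta$ in $\Omega'$ and $\phi\in\Lip_c(\Omega')$), one obtains $-\int_{\Omega'}\nabla f\cdot\nabla\phi\di\meas\le\int_{\Omega'}\phi\,\eta\di\meas$, which is the desired inequality. (Alternatively one could avoid subtracting $v_\eta$ and work throughout with the functional $g\mapsto\int\tfrac12|\nabla g|^2+g\,\eta\di\meas$ of \autoref{re:locsolv}, whose superminimizers are exactly the functions satisfying \autoref{def:distributions}.)

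\emph{Step 2: the case $\eta\equiv 0$.} Now $u$ is superharmonic in the sense of \cite{Bjorn-Bjorn11} by \autoref{def:classicalsupersolution} with $\eta=0$, and in addition $u\in W^{1,2}_{\loc}(\Omega')$. By the results of \cite{Bjorn-Bjorn11} (cf.\ also \cite{KinnunenMartio02}) quoted before \autoref{thm:supersolsupermin}, for every $\Omega''\Subset\Omega'$ and every $k\in\setR$ the truncation $\min(u,k)$ is a superminimizer of the Dirichlet energy on $\Omega''$, i.e.\ $\int_{\Omega''}\nabla\min(u,k)\cdot\nabla\psi\di\meas\ge 0$ for every non-negative $\psi\in\Lip_c(\Omega'')$. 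Using the chain rule $\nabla\min(u,k)=\mathbf{1}_{\{u<k\}}\nabla u$ $\meas$-a.e.\ one gets $\int_{\{u<k\}\cap\Omega''}\nabla u\cdot\nabla\psi\di\meas\ge 0$; letting $k\uparrow\infty$, since $\{u<k\}\uparrow\{u<\infty\}$ has full measure in $\Omega''$ (as $u\in L^2_{\loc}$) and $\nabla u\cdot\nabla\psi\in L^1(\Omega'')$ with the integrable majorant $\|\nabla\psi\|_\infty|\nabla u|\mathbf{1}_{\supp\psi}$, dominated convergence gives $\int_{\Omega''}\nabla u\cdot\nabla\psi\di\meas\ge 0$. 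Hence $u$ itself is a superminimizer of the Dirichlet energy on $\Omega'$, and by \cite{Gigli-Mondino12} this is equivalent to $\boldsymbol{\Delta}u\le 0$ in the sense of \autoref{def:distributions} on $\Omega'$. This closes Step 2, and with it the proof.

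\emph{On the main difficulty.} I do not expect a deep obstacle, but two points require care. First, the hypothesis $f\in W^{1,2}_{\loc}$ is genuinely used and cannot be omitted: a comparison supersolution need not lie in $W^{1,2}_{\loc}$ (cf.\ the Remark after \autoref{def:classicalsupersolution}), and it is exactly this integrability that legitimizes the passage from ``$\min(u,k)$ superminimizer for every $k$'' to ``$u$ superminimizer'' in Step 2. Second, one must verify that the Björn–Björn machinery, developed for $p$-energy minimizers on PI spaces, transfers verbatim with $p=2$ to the Cheeger/Dirichlet energy on $\RCD(K,N)$ spaces, and that the inhomogeneous term is correctly absorbed by the reduction of Step 1. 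The discrepancy between $f$ and its precise representative $f_*$ is immaterial, since $f=f_*$ $\meas$-a.e.\ and hence their gradients agree $\meas$-a.e.\ and all integrals in \autoref{def:distributions} are unchanged.
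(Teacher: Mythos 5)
Your proposal is correct and follows the same route the paper takes, namely chaining the Björn--Björn equivalence ``comparison supersolution $\Leftrightarrow$ superminimizer'' with the Gigli--Mondino equivalence ``superminimizer $\Leftrightarrow$ distributional $\boldsymbol{\Delta}\le 0$'', after reducing to $\eta\equiv 0$ by subtracting a local solution of the Poisson equation as in the remark after \autoref{def:classicalsupersolution}. The paper presents the theorem as a direct corollary of these references without spelling out the details; your Step~2 usefully makes explicit the truncation-and-dominated-convergence passage that promotes ``$\min(u,k)$ is a superminimizer for every $k$'' to ``$u$ is a superminimizer'', which is precisely where the standing hypothesis $f\in W^{1,2}_{\loc}$ is used.
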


By \autoref{thm:supersolsupermin} and \autoref{thm:fromclassictodistri} above, the notions in \autoref{def:distributions} and \autoref{def:classicalsupersolution} are  essentially equivalent, at least under the assumption that $f\in W^{1,2}_{\loc}$.

\medskip

The following is a slight extension of \cite[Lemma 3.23]{MS21}. With respect to \cite{MS21} the Lipschitz regularity requirement is dropped, but the proof is basically unchanged.
It is a key tool for the implication from Laplacian bounds in the viscosity sense to Laplacian bounds in the sense of distributions.

\begin{lemma}[Minimum principle for viscosity super solutions]\label{lemma:maxprincvisco}
Let $(X,\dist,\meas)$ be an $\RCD(K,N)$ metric measure space for some $K\in\setR$ and $N\in [1,\infty)$. Let $\Omega\subset X$ be an open and bounded domain such that $\meas(X\setminus\bar{\Omega})>0$.
Let $f:\Omega\to\setR$ be a lower semicontinuous function such that $\Delta f\le 0$ in the viscous sense. Then
\begin{equation*}
\min_{x\in\Omega} f(x)=\min_{x\in\partial \Omega} f(x)\, .
\end{equation*}
\end{lemma}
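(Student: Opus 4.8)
The plan is to argue by contradiction, following the classical strategy for proving a minimum principle for viscosity supersolutions, but taking care that no Lipschitz hypothesis is used. Suppose that $m:=\min_{\Omega}f<\min_{\partial\Omega}f$. Since $f$ is lower semicontinuous on the relatively compact set $\bar\Omega$, the minimum over $\bar\Omega$ is attained, and by assumption it is attained at some interior point, not on $\partial\Omega$. The idea is to perturb $f$ at a near-minimum point by subtracting a function with strictly negative Laplacian, so that the perturbed function still has an interior minimum but is touched from below by a genuine test function on which the viscous inequality can be evaluated, yielding $\Delta\phi\le 0$ at the touching point while the construction forces $\Delta\phi>0$ there. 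Concretely, I would fix a point $x_0\in\Omega$ with $f(x_0)$ close to $m$, choose a small ball $B=B_r(x_0)\Subset\Omega$, and on $B$ solve (or otherwise produce, using \autoref{re:locsolv}) a continuous function $w$ with $\Delta w=-1$ in $B$; by the comparison/continuity furnished by \autoref{prop:lsc+Lebesgue} one controls the oscillation of $w$ over $B$ by a constant $C(r)$ that tends to $0$ as $r\to 0$.

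The key point is then that $f-\delta w$ (for a suitable small $\delta>0$) is still lower semicontinuous on $\bar B$, attains a minimum over $\bar B$ at some point $x_1$, and — because the boundary values of $f$ on $\partial B$ exceed $f(x_0)$ by a definite amount once $x_0$ is chosen close enough to the minimum of $f$ and $r$ small enough that $C(r)\delta$ is tiny — this minimizer $x_1$ lies in the open ball $B$, not on $\partial B$. Let $\phi:=\delta w + \bigl(f(x_1)-\delta w(x_1)\bigr)$, a constant shift of $\delta w$. Then $\phi\in D(\Delta,B)$ with $\Delta\phi=-\delta$ continuous on $B$, and by construction $\phi(x_1)=f(x_1)$ while $\phi(y)=\delta w(y)+f(x_1)-\delta w(x_1)\le f(y)$ for all $y\in B$ (this is exactly the statement that $x_1$ minimizes $f-\delta w$). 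Hence $\phi$ is an admissible test function in \autoref{def:viscosity} touching $f$ from below at the interior point $x_1\in B\Subset\Omega$, so the viscous inequality $\Delta f\le 0$ gives $\Delta\phi(x_1)\le 0$. But $\Delta\phi(x_1)=-\delta<0$ is consistent — so I instead need the strict sign to come out the wrong way.

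Let me correct the bookkeeping: one should use $\Delta w=+1$ rather than $-1$, i.e.\ subtract a \emph{superharmonic-type} bump whose Laplacian has the wrong sign. Replace $w$ by a continuous function on $B$ with $\Delta w=1$ in $B$; then $\phi:=-\delta w+c$ with $c$ the shift making $\phi(x_1)=f(x_1)$ at the minimizer $x_1$ of $f+\delta w$ over $\bar B$ satisfies $\phi\le f$ on $B$, $\phi(x_1)=f(x_1)$, $\Delta\phi=-\delta$... still negative. The genuine mechanism, which I would implement carefully, is the standard one: choose $w$ with $\Delta w$ of a given sign and large, subtract $\delta w$ from $f$, note the perturbed minimum is interior, and observe that the test function built from $\delta w$ has Laplacian equal to $\delta\,\Delta w$; by choosing $\Delta w\equiv c_0>0$ we get a test function with $\Delta\phi(x_1)=\delta c_0>0$, contradicting $\Delta\phi(x_1)\le 0$. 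The only subtlety is ensuring the perturbation is small enough in sup-norm that the minimizer stays in the open ball — this is where one invokes that the oscillation of the solution of $\Delta w=c_0$ over $B_r$ is $O(r^2)$ by the regularity in \autoref{prop:lsc+Lebesgue} (or more simply that it vanishes as $r\to0$), while the gap between $f(x_0)$ and $\inf_{\partial B_r}f$ can be kept bounded below by the lower semicontinuity of $f$ and $f(x_0)$ being close to the global minimum.

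The main obstacle, and the reason the statement is nontrivial, is precisely the interplay between the two smallness requirements: one needs $x_0$ close to the argmin of $f$ so that the boundary-barrier comparison on $\partial B_r(x_0)$ works, yet $f$ is only lower semicontinuous so one cannot freely evaluate $f$ pointwise near $x_0$; the remedy is to choose $x_0$ with $f(x_0)<m+\varepsilon$ and then use lower semicontinuity of $f$ to get $f\ge m+\varepsilon$ would fail, so instead one fixes any minimizing sequence and uses compactness of $\bar\Omega$ to land at an actual interior minimizer $\bar x$ of $f$ (attained by lower semicontinuity), works on a small ball around $\bar x$, and uses that on the compact sphere $\partial B_r(\bar x)$ one has $f\ge f(\bar x)$ simply because $\bar x$ is a global minimizer — removing the need for any quantitative estimate and making the Lipschitz hypothesis of \cite{MS21} superfluous. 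One then picks $r$ small enough that $\Subset\Omega$ holds and $\delta$ small enough that $\delta\sup_{B_r}|w|<$ anything needed; the contradiction is immediate. I would also record the trivial hypothesis $\meas(X\setminus\bar\Omega)>0$ is used (if at all) only to guarantee $\partial\Omega\ne\emptyset$ and that the heat-flow/boundary machinery referenced elsewhere applies, but for this lemma the bounded-domain compactness and lower semicontinuity are the real inputs.
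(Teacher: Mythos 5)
Your overall strategy -- perturb $f$ by a small multiple of a function $v$ with $\Delta v\equiv c_0>0$, argue the perturbed function still has an interior minimizer, and then use the constant shift of that perturbation as a viscosity test function with $\Delta\phi=\delta c_0>0$ -- is exactly the paper's strategy, and after two false starts you do settle on the correct sign bookkeeping. The genuine problem is in the localization step, and it is not cosmetic.

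You propose to work on a small ball $B_r(\bar x)$ around a global minimizer $\bar x$ of $f$ over $\bar\Omega$, asserting that it suffices to know $f\ge f(\bar x)$ on $\partial B_r(\bar x)$ and that this ``removes the need for any quantitative estimate.'' This fails. With $\Delta w=c_0>0$ the barrier $w$ is subharmonic, so by the maximum principle $w$ attains its maximum over $\bar B_r$ on $\partial B_r$; in particular one can have $\max_{\partial B_r}w>w(\bar x)$. Then for $z\in\partial B_r$ with $f(z)=f(\bar x)$ (which is perfectly compatible with $\bar x$ being a \emph{global} minimizer -- the minimum need not be strict on $\partial B_r$) and $w(z)>w(\bar x)$, one gets $f(z)-\delta w(z)<f(\bar x)-\delta w(\bar x)$, so the minimizer of $f-\delta w$ over $\bar B_r$ escapes to the boundary of the ball and there is no interior touching point to feed into the viscosity definition. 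Your intermediate claim that ``the boundary values of $f$ on $\partial B$ exceed $f(x_0)$ by a definite amount'' is simply not available from lower semicontinuity. The paper sidesteps all of this by perturbing on the \emph{whole} domain $\Omega$: the contradiction hypothesis gives the quantitative strict gap $\min_{\partial\Omega}f-\min_{\Omega}f>0$, and comparing this to $\eps\,\max_{\partial\Omega}v$ forces the perturbed minimizer into the interior for $\eps$ small. (One could also enclose the full compact set $\{f=\min f\}\subset\Omega$ in a subdomain $\Omega'\Subset\Omega$, since this set cannot touch $\partial\Omega$ under the contradiction hypothesis, and then $\min_{\partial\Omega'}f>\min f$ strictly; but a single small ball around one minimizer is not enough.) You should rewrite the argument using the global gap; as written, the step producing an interior minimizer for the perturbed function does not go through.
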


\begin{proof}
Let us suppose by contradiction that 
\begin{equation*}
\min_{x\in\Omega} f(x)<\min_{x\in\partial \Omega} f(x)\, .
\end{equation*}
Then the minimum in the left hand side is attained at an interior point $x_0\in\Omega$. In particular
\begin{equation}\label{eq:intmin}
\min_{x\in\partial \Omega}f(x)>f(x_0)\, .
\end{equation}
Consider a solution of the Poisson problem $\Delta v=1$ on $X\supset \Omega'\supset \Omega$ such that $v\ge 0$ on $\Omega$ and 
\begin{equation*}
M:=\max_{\partial \Omega} v\ge \min_{\partial \Omega}v=:m>0\, .
\end{equation*} 
This function can be obtained with an additive perturbation from any solution of $\Delta u=1$ on $\Omega'$, by the local Lipschitz regularity \cite[Theorem 1.2]{Jiang12}.
\\We claim that, for $\eps>0$ sufficiently small, also
\begin{equation*}
f_{\eps}(x):=f(x)-\eps v(x)
\end{equation*}
attains a local minimum at an interior point in $\Omega$.\\
Let us suppose by contradiction that this is not the case. Then, for any $\eps>0$, the global minimum  of  $f_{\eps}$ on $\bar{\Omega}$ is attained  on $\partial \Omega$. In particular there exists $x_{\eps}\in \partial \Omega$ such that
\begin{equation*}
f(x_{\eps})-\eps M\le f(x_{\eps})-\eps v(x_{\eps})=f_{\eps}(x_\eps)\le f_{\eps}(x_{0})\le f(x_0)\, .
\end{equation*}
Hence
\begin{equation*}
\min_{x\in\partial\Omega}f(x)-f(x_0)\le f(x_{\eps})-f(x_0)\le M\eps\, ,\quad\text{for any $\eps>0$}\, ,
\end{equation*}
which yields a contradiction with \eqref{eq:intmin} a soon as $\eps$ is sufficiently small.
\medskip

Let now $\eps>0$ be small enough to get that $f_{\eps}=f-\eps v$ has a local minimum $c\in \setR$ at $\bar{x}\in\Omega$. Note that, by assumption, the function $g:=f-c$ satisfies $\Delta g\leq 0$ in the viscous sense. Using $\eps v$ as a test function in the definition of the bound $\Delta g\leq 0$ in viscous sense,  we infer 
\begin{equation*}
\Delta (\eps v)(\bar{x})\le 0\, .
\end{equation*}
This is a contradiction since $\Delta v=1$ on $\Omega$.
\end{proof}

\begin{proof}[Proof of ``Viscosity supersolution implies supersolution in the comparison sense'']
The implication was established in \cite[Theorem 3.24]{MS21} under stronger assumptions on $f$. We repeat the proof here and indicate the minor modifications needed to deal with the more general case.\\
We claim that if $\Delta f\le \eta$ in the viscous sense, then $f$ is a supersolution in the comparison sense to $\Delta f=\eta$, as in  \autoref{def:classicalsupersolution}. This is a consequence of \autoref{lemma:maxprincvisco}. Indeed, let us consider any open subdomain $\Omega'\Subset\Omega$ and any function $g\in C(\overline{\Omega'})$ such that $\Delta g=\eta$ on $\Omega'$ and $g\le f$ on $\partial\Omega'$.\\
Observe that $h:=f-g$ is lower semicontinuous on $\overline{\Omega'}$ and verifies $\Delta h\le 0$ in the viscous sense on $\Omega'$, since $\Delta f\le \eta$ in the viscous sense and $\Delta g=\eta$. Taking into account \autoref{rm:shiftingto0} we infer by \autoref{lemma:maxprincvisco} that
\begin{equation*}
\min_{x\in\Omega'} h(x)=\min_{x\in\partial \overline{\Omega'}} h(x)\ge 0\, .
\end{equation*}
It follows that $f\ge g$ on $\Omega'$, hence $f$ is a supersolution in the comparison sense of $\Delta f=\eta$. 

\end{proof}

\begin{proof}[Proof of ``Distributional implies heat flow sense'']
By a truncation argument and the strong locality properties of the heat flow, see \cite[Lemma 3.8]{G22} and \cite[Lemma 2.53]{MS21}, it is sufficient to establish the implication in the simplified setting where $f\in W^{1,2}(X)$ satisfies $\boldsymbol{\Delta}f\le \eta$ on $X$ for some continuous function $\eta:X\to\setR$ with bounded support. 
\smallskip

Notice that 
\begin{equation}\label{eq:intf}
P_tf(x)-P_sf(x)\le \int_s^tP_r\eta(x)\di r\, ,
\end{equation}
for any $x\in X$ and any $0<s<t$.\\
By \autoref{prop:lsc+Lebesgue}, $f$ admits a lower semicontinuous representative $f_*$ and $P_sf(x)\to f_*(x)$ as $s\to 0$ for any $x\in X$.
By continuity of $\eta$ and \eqref{eq:intf} we get
\begin{equation}
\limsup_{t\to 0}\frac{P_tf(x)-f_*(x)}{t}\le \limsup_{t\to 0}\frac{1}{t}\int_0^tP_s\eta(x)\di s=\eta(x)\, .
\end{equation}

\end{proof}

\begin{proof}[Proof of ``Heat flow sense implies viscosity sense''] Let $x\in\Omega$ and $\phi$ be a test function at $x$ as in the definition of Laplacian bounds in the viscosity sense. By \cite[Lemma 2.56]{MS21}, for any extension $\tilde{\phi}:X\to\setR$ of $\phi:\Omega'\to \setR$ with polynomial growth it holds
\begin{equation}\label{eq:laplaphitilde}
\lim_{t\to 0}\frac{P_t\tilde{\phi}(x)-\phi(x)}{t}=\Delta\phi(x)\, .
\end{equation}
By the very definition of Laplacian bounds in the heat flow sense it holds
\begin{equation}\label{eq:laplaftilde}
\limsup_{t\to 0}\frac{P_t\tilde{f}(x)-f(x)}{t}\le \eta(x)\, ,
\end{equation}
where $\tilde{f}=f$ on $\Omega$ and $\tilde{f}=0$ on $X\setminus \Omega$.\\
As $\phi\le f$ on $\Omega'$, from \eqref{eq:laplaphitilde} and \eqref{eq:laplaftilde} and \cite[Lemma 2.53]{MS21} we immediately deduce that
\begin{equation}
\Delta\phi(x)\le \eta(x)\, .
\end{equation}
\end{proof}

The above implications are clearly sufficient to establish the equivalence between the notions of Laplacian bounds in distributional, heat flow and viscosity sense, under mild regularity assumptions. Below we provide a direct proof of the implication from Laplacian bounds in the heat flow sense to Laplacian bounds in the sense of distributions under the assumption that $f$ is bounded, which is of independent interest.
\smallskip

The key tool in the proof is the stability of upper Laplacian bounds under the Hopf-Lax semigroup, valid on general $\RCD(K,\infty)$ spaces, see \cite[Section 4]{MS21} and \cite[Lemma 4.8]{G22} for the present phrasing.

\begin{lemma}\label{lemma:stabilityhopflax}
Let $(X,\dist,\meas)$ be an $\RCD(K,\infty)$ metric measure space for some $K\in\setR$. Let $f:X\to\setR$ be Borel and bounded. Let $t>0$ and assume that for some $x,y\in X$ it holds
\begin{equation}
\mathcal{Q}_tf(x)=f(y)+\frac{\dist^2(x,y)}{2t}\, .
\end{equation} 
Then 
\begin{equation}
\limsup_{s\to 0}\frac{P_s\mathcal{Q}_tf(x)-\mathcal{Q}_tf(x)}{s}\le \limsup_{s\to 0}\frac{P_sf(y)-f(y)}{s}-K\frac{\dist^2(x,y)}{t}\, .
\end{equation}
\end{lemma}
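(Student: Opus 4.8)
The plan is to exploit the variational structure of the Hopf-Lax semigroup together with a Bochner-type inequality for the heat flow. First I would record the elementary one-sided estimate coming from the definition of $\mathcal{Q}_t$: for every $z\in X$ and every $s>0$,
\begin{equation*}
\mathcal{Q}_tf(z)\le f(z')+\frac{\dist^2(z,z')}{2t}\quad\text{for all }z',
\end{equation*}
while at $x$ this is an equality with $z'=y$. Applying $P_s$ (which is linear, order preserving and mass preserving) and using that $P_s$ acts on the variable $z$, one gets the heuristic
\begin{equation*}
P_s\mathcal{Q}_tf(x)\le \int\Big(f(y)+\frac{\dist^2(z,y)}{2t}\Big)\di p_s(x,z)=P_sf(y)+\frac{1}{2t}\int \dist^2(z,y)\di p_s(x,z),
\end{equation*}
where $p_s(x,\cdot)$ is the heat kernel measure. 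Subtracting $\mathcal{Q}_tf(x)=f(y)+\dist^2(x,y)/(2t)$, dividing by $s$ and passing to $\limsup_{s\to 0}$, the whole matter reduces to controlling
\begin{equation*}
\limsup_{s\to 0}\frac{1}{s}\Big(\frac{1}{2t}\int \dist^2(z,y)\di p_s(x,z)-\frac{\dist^2(x,y)}{2t}\Big)=\frac{1}{2t}\limsup_{s\to 0}\frac{P_s\big(\dist^2(\cdot,y)\big)(x)-\dist^2(x,y)}{s}.
\end{equation*}
So the key point is a bound for the short-time behaviour of the heat flow applied to the squared distance function, namely an inequality of the form $\limsup_{s\to 0} s^{-1}\big(P_s(\dist^2(\cdot,y))(x)-\dist^2(x,y)\big)\le 2N - 2K\,\dist^2(x,y)$ — or at least the part $-2K\,\dist^2(x,y)$ of it, which is exactly what is needed to produce the claimed $-K\dist^2(x,y)/t$ after multiplying by $1/(2t)$; the remaining $2N/(2t)=N/t$ term is absorbed because $\mathcal{Q}_t$ is a maximum and one can localize the competitor, or it simply does not appear if one is more careful about which heat-kernel expansion is used.

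For this heat-kernel estimate I would invoke the now-standard fact on $\RCD(K,\infty)$ spaces that $s\mapsto \tfrac12 W_2^2(\mu_s,\delta_y)$ is controlled along the heat flow (EVI$_K$ / contraction estimates of Ambrosio-Gigli-Savaré and Erbar-Kuwada-Sturm), or equivalently the Bakry-Émery gradient estimate $|\nabla P_s u|^2\le e^{-2Ks}P_s|\nabla u|^2$ applied with $u$ an approximation of $\dist(\cdot,y)$, noting $|\nabla \dist(\cdot,y)|\le 1$. Concretely, from $\mathcal{Q}_tf(x)=f(y)+\dist^2(x,y)/(2t)$ one knows $\mathcal{Q}_tf(x)\le P_s\mathcal{Q}_tf$ applied back... more cleanly: use that $z\mapsto \mathcal{Q}_tf(z)+$ (smooth correction) is, near $x$, touched from above by the paraboloid $f(y)+\dist^2(\cdot,y)/(2t)$, whose heat-flow derivative at $x$ is at most $\limsup_s s^{-1}(P_sf(y)-f(y)) + \tfrac{1}{2t}\limsup_s s^{-1}(P_s\dist^2(\cdot,y)(x)-\dist^2(x,y))$, and the second term is $\le \dfrac{1}{2t}\big(C(N) - 2K\dist^2(x,y)\big)$.

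\medskip

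The main obstacle, I expect, is precisely the rigorous justification of the short-time Laplacian-type bound for $\dist^2(\cdot,y)$ at the specific point $x$, in the pointwise (not merely $L^2$ or distributional) sense that the statement requires; this is where one must be careful, since $\dist^2(\cdot,y)$ is not in $D(\Delta)$ in general and the $-K$ factor must come out sharply. The right way to handle it is to transfer the problem to the dual/Wasserstein side: using the $K$-contractivity of the heat semigroup in $W_2$ together with the fact that $\dist^2(x,y)/(2t)$ realizes the Hopf-Lax infimum (so $f(y)+\dist^2(\cdot,y)/(2t)$ is a genuine supporting function at $x$ for $\mathcal{Q}_tf$, up to the arbitrarily small localization error), one converts the heat-flow derivative of $\mathcal{Q}_tf$ at $x$ into that of $f$ at $y$ plus the contraction defect $-K\dist^2(x,y)/t$. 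A secondary, purely bookkeeping, obstacle is ensuring the $\limsup$'s combine correctly (subadditivity of $\limsup$, and that the correction terms vanish in the limit); this is routine given \cite[Lemma 2.53]{MS21}-type statements already used elsewhere in the paper. For the full argument I would follow \cite[Section 4]{MS21} / \cite[Lemma 4.8]{G22} essentially verbatim, the only novelty being to phrase everything for merely Borel bounded $f$ rather than Lipschitz $f$, which costs nothing since no regularity of $f$ beyond boundedness is used in the estimates above.
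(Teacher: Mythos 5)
There is a genuine gap, and it is precisely where you flagged uncertainty. The first chain of inequalities you write is incorrect: after inserting the competitor $z'=y$ uniformly in $z$ into the Hopf--Lax variational inequality and integrating against $p_s(x,\cdot)\meas$, the term $\int f(y)\,\di p_s(x,z)$ equals $f(y)$ (the heat kernel is a probability measure), \emph{not} $P_sf(y)$. With a fixed competitor the whole argument collapses to
\begin{equation*}
\limsup_{s\to 0}\frac{P_s\mathcal{Q}_tf(x)-\mathcal{Q}_tf(x)}{s}\le\frac{1}{2t}\limsup_{s\to 0}\frac{P_s\bigl(\dist^2(\cdot,y)\bigr)(x)-\dist^2(x,y)}{s}\,,
\end{equation*}
and no amount of estimating the right hand side can recreate the missing $\limsup_s s^{-1}(P_sf(y)-f(y))$ term. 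Moreover, the right hand side is essentially $\Delta\dist^2(\cdot,y)$ at $x$; in an $\RCD(K,\infty)$ space there is no Laplacian comparison for the squared distance at all, and even in $\RCD(K,N)$ the comparison bound is of the form $C(K,N,\dist(x,y))$, which contains an unavoidable dimension term and does not reduce to $-2K\dist^2(x,y)$. The claim that this term ``is absorbed'' or ``does not appear if one is careful'' is not substantiated and is false: it is a real obstruction to the fixed-competitor route.

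The idea you gesture at near the end --- the $K$-contractivity of the heat flow in $W_2$ --- is indeed the right tool, but it has to be implemented via an optimal \emph{coupling}, not a fixed competitor. Concretely, set $\mu_s:=p_s(x,\cdot)\meas$, $\nu_s:=p_s(y,\cdot)\meas$ and let $\ppi_s$ be an optimal transport plan between them. Integrating the pointwise Hopf--Lax inequality $\mathcal{Q}_tf(z)\le f(w)+\dist^2(z,w)/(2t)$ against $\ppi_s(z,w)$ gives
\begin{equation*}
P_s\mathcal{Q}_tf(x)\le P_sf(y)+\frac{W_2^2(\mu_s,\nu_s)}{2t}\le P_sf(y)+\frac{e^{-2Ks}}{2t}\dist^2(x,y)\,,
\end{equation*}
by the Bakry--\'Emery/EVI$_K$ Wasserstein contraction $W_2(\mu_s,\nu_s)\le e^{-Ks}\dist(x,y)$. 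Subtracting the identity $\mathcal{Q}_tf(x)=f(y)+\dist^2(x,y)/(2t)$, dividing by $s$ and letting $s\to 0$ produces exactly $\limsup_s s^{-1}(P_sf(y)-f(y))-K\dist^2(x,y)/t$, with no dimension term and no need to touch $\Delta\dist^2$. Your proposal names this ingredient but never assembles the coupling step; that step is the whole proof, and as written the argument does not go through.
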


\begin{proof}[Proof of ``Heat flow sense implies distributional'']
The idea of the proof is the following. After running the Hopf-Lax semigroup for time $t>0$, $\mathcal{Q}_tf$ has the same upper Laplacian bounds of $f$ in the heat flow sense, up to correction terms going to $0$ as $t\downarrow 0$. Moreover, $\mathcal{Q}_tf$ has Laplacian bounded from above in the sense of distributions, by the Laplacian comparison. Hence the (sharper) Laplacian bounds in the heat flow sense  improve to Laplacian bounds in the sense of distributions. The conclusion follows exploiting the stability of Laplacian bounds in the sense of distributions by taking the limit as $t\downarrow 0$.
\medskip

Assume for the moment that $\Omega=X$, i.e.\ that $f:X\to\setR$ is lower semicontinuous and bounded, $\eta:X\to\setR$ is continuous and that $\Delta f\le \eta$ in the heat flow sense. Borrowing the notation from \cite[Section 4]{G22}, for any $x\in X$ and $t>0$ we let $F_t(x)$ be a minimizer in the variational definition of $\mathcal{Q}_tf(x)$, namely, 
\begin{equation}
\mathcal{Q}_tf(x)=f(F_t(x))+\frac{\dist^2(x,F_t(x))}{2t}\, .
\end{equation}
Notice that such a minimizer exists for every $x\in X$ by local compactness, boundedness and lower semicontinuity of $f$. Moreover, $F_t(x)$ is uniquely defined for $\meas$-a.e. $x\in X$, by \cite[Theorem 4.9]{G22}. 
\smallskip

By \autoref{lemma:stabilityhopflax}, for every $x\in X$ it holds
\begin{equation}
\limsup_{s\to 0}\frac{P_s\mathcal{Q}_tf(x)-\mathcal{Q}_tf(x)}{s}\le \limsup_{s\to 0}\frac{P_sf(F_t(x))-f(F_t(x))}{s}-K\frac{\dist^2(x,F_t(x))}{t}\, .
\end{equation}
Since $f$ has Laplacian bounded from above by $\eta$ in the heat flow sense, we deduce that
\begin{equation}
\limsup_{s\to 0}\frac{P_s\mathcal{Q}_tf(x)-\mathcal{Q}_tf(x)}{s}\le \eta(F_t(x))-K\frac{\dist^2(x,F_t(x))}{t}\, ,
\end{equation}
for every $x\in X$. \\
By the Laplacian comparison for $\RCD(K,N)$ spaces, $\mathcal{Q}_tf$ has Laplacian locally bounded from above in the sense of distributions. Hence by \cite[Lemma 3.2]{G22} it holds
\begin{equation}\label{eq:boundt}
\boldsymbol{\Delta}\mathcal{Q}_tf\le \eta_t\meas\, ,
\end{equation}
where we set
\begin{equation}
\eta_t(x):=  \eta(F_t(x))-K\frac{\dist^2(x,F_t(x))}{t}\, ,
\end{equation}
for any $x\in X$ and any $t>0$.\\
By standard arguments (see for instance \cite[Section 3]{AmbrosioGigliSavare11}), the functions $\abs{\eta_t}$ are locally uniformly bounded, with bounds independent of $t\in (0,1)$. \\
Analogously, the functions $\mathcal{Q}_tf$ are locally uniformly bounded with bounds independent of $t\in (0,1)$ and $\mathcal{Q}_tf\to f$ as $t\downarrow 0$ in the $\meas$-a.e.\;sense. Moreover, the functions $\mathcal{Q}_tf$ have locally bounded $W^{1,2}$-norms, uniformly w.r.t. $t$ (by \autoref{re:sobreg}). Hence, by lower semicontinuity of the $W^{1,2}$ energy, $f\in W^{1,2}_{\loc}$.
By the discussion around \cite[Equation (3.30)]{G22} we can pass to the limit the bounds \eqref{eq:boundt} for a sequence $t_n\downarrow 0$ to obtain  that $f$ has Laplacian locally bounded from above in the sense of distributions (with a possibly non sharp upper bound).\\
To conclude, the non sharp upper Laplacian bounds in the sense of distributions can be combined, again by by \cite[Lemma 3.2]{G22}, with the upper Laplacian bounds in the heat flow sense to obtain that $\Delta f\le \eta$ in the sense of distributions.

To deal with the case $\Omega$  bounded open subset of $X$ we argue as in \cite[Lemma 3.7]{G22}.\\ 
Let $s:=\sup_\Omega f$, $\Omega'\Subset\Omega$ and for $C\gg0$ consider the function  $f':=f+C\dist^2(\cdot,\Omega')$. By the (distributional) Laplacian comparison properties of the distance and the implication `distributional sense $\Rightarrow$ heat flow sense' that we already established, 
we know that $\Delta f'\leq\eta'$ in the heat flow sense for some $\eta'\in \Cb(\Omega)$ with $\eta'=\eta$ in $\Omega'$. This last identity follows from the locality properties of the heat flow, see \cite[Lemma 3.8]{MS21}.\\ 
It is clear that for $C$ sufficiently large we have $f>s$ in a neighbourhood of $\partial\Omega$ and then that the function $f'':=f'\wedge s$, defined as $s$ outside $\Omega$ satisfies $\lim_{t\downarrow0}\frac{P_tf''(x)-f''(x)}{t}=0$ in the interior of $\{f''=s\}$ and 
\begin{equation}
\limsup_{t\downarrow0}\frac{P_tf''(x)-f''(x)}{t}\leq \limsup_{t\downarrow0}\frac{P_tf'(x)-f''(x)}{t}\, ,\quad  \text{on}\quad  \{f'\leq s\}\subset\Omega\, . 
\end{equation}
Hence $\Delta f''\leq \eta''$ in the heat flow sense for some $\eta''\in \Cb(X)$ equal to $\eta$ on $\Omega'$. By what  proved above we conclude that $\boldsymbol{\Delta} f''\leq \eta''$ in 
the sense of distributions on the whole $X$. Hence  $\boldsymbol{\Delta}f\leq \eta$ on $\Omega'$, and then also on the whole $\Omega$, by the locality properties of the distributional Laplacian and the arbitrariness of $\Omega'\Subset\Omega$ (see also \cite[Proposition 4.17]{Gigli12}).
\end{proof}

\section{Applications}

With the tools developed in this note and in the previous \cite{G22,MS21}, we can recover a version of the classical approximate maximum principle in the viscosity theory of PDEs. We refer to \cite[Theorem 4.2]{ZhangZhu16} for a different approach based on Kato's inequality.

\begin{theorem}\label{thm:apprmaxpr}
Let $(X,\dist,\meas)$ be an $\RCD(K,N)$ metric measure space, for some $K\in \setR$ and $N\in [1,\infty)$. Let $\Omega\subset X$ be a bounded domain and $f\in W^{1,2}(\Omega)\cap \Cb(\Omega)$. Assume that $\boldsymbol{\Delta }f\le C$ in the sense of distributions on $\Omega$ for some $C\in\setR$. If $x\in\Omega$ is a minimum point of $f$, then there exists a sequence $\Omega\ni x_n\to x$ such that 
\begin{itemize}
\item[(i)] $x_n$ are Lebesgue points of the absolutely continuous part $\Delta^{\mathrm{ac}}f$ of $\boldsymbol{\Delta}f$;
\item[(ii)] $[\Delta^{\mathrm{ac}}f(x_n)]_{-}\to 0$ as $n\to \infty$;
\item[(iii)] $x_n$ are Lebesgue points for $\abs{\nabla f}$ and $\abs{\nabla f}(x_n)\to 0$ as $n\to\infty$.
\end{itemize}

\end{theorem}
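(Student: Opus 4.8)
The plan is to combine the equivalences established above with a Jensen--Ishii/ABP-type contact argument, carried out on the $\RCD(K,N)$ space through the Hopf--Lax semigroup and the Laplacian comparison for the distance. First I would reduce to the case in which $x$ is the unique, strict, interior minimiser of $f$ over $\bar\Omega$ and $\Omega$ is a small ball: replacing $f$ by $f_\delta:=f+\delta\,\dist^2(\cdot,x)$ for small $\delta>0$ preserves the distributional bound (since $\boldsymbol{\Delta}\dist^2(\cdot,x)\le C'\meas$ locally by Laplacian comparison, one has $\boldsymbol{\Delta}f_\delta\le C+\delta C'$), makes the minimum strict, and perturbs $\abs{\nabla f}$ and $\Delta^{\mathrm{ac}}f$ near $x$ only by $O(\delta)$, hence harmlessly after a diagonal argument with $\delta\downarrow 0$. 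Since $f\in\Cb(\Omega)$, by the equivalence results I may use the heat flow formulation of $\boldsymbol{\Delta}f\le C$ freely; combined with the minimality of $x$ this yields the quadratic growth estimate $\fint_{B_r(x)}(f-f(x))\di\meas\le C''r^2$ for small $r$. Finally, $\boldsymbol{\Delta}f$ is a signed Radon measure with $\boldsymbol{\Delta}f\le C\meas$, so its singular part is non-positive and $\Delta^{\mathrm{ac}}f\le C$ $\meas$-a.e., and $\meas$-a.e.\ point of $\Omega$ is a Lebesgue point of $\abs{\nabla f}$ and of $\Delta^{\mathrm{ac}}f$.

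For the core step, fix $t>0$ and take $\delta\ll t$. For $z\in B_\delta(x)$ let $y_z:=F_t(z)$ minimise $w\mapsto f(w)+\tfrac{1}{2t}\dist^2(z,w)$; then $\psi_z:=\mathcal{Q}_tf(z)-\tfrac{1}{2t}\dist^2(z,\cdot)$ touches $f$ from below at $y_z$, one has $\dist(z,y_z)\le\dist(z,x)\le\delta$ (strict minimality of $x$), hence $y_z\in B_{2\delta}(x)$, and $\abs{\nabla\psi_z}(y_z)=\dist(z,y_z)/t\le\delta/t$. Let $E_{t,\delta}:=\{y_z:z\in B_\delta(x)\}$ be the contact set and recall that $z\mapsto y_z$ is $\meas$-a.e.\ single valued (cf.\ \cite[Theorem 4.9]{G22}). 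A Jensen/ABP analysis, carried out with the $\RCD$ second-order calculus and the optimal-transport (measure-contraction) Jacobian estimates for this map: (i) gives $\meas(E_{t,\delta})>0$ with a lower bound in terms of $\meas(B_\delta)$ and $C_t:=C+o_t(1)$ --- the a.c.\ Laplacian bound for $\mathcal{Q}_tf$, obtained from \autoref{lemma:stabilityhopflax} and the heat flow bound for $f$ exactly as in the proof of ``Heat flow sense implies distributional'' --- via the area formula and the arithmetic--geometric mean inequality applied to the positive semidefinite part of $\Hess\mathcal{Q}_tf$ on $E_{t,\delta}$; and (ii) shows that on $E_{t,\delta}$ one has $\nabla f=\nabla\psi_{z(y)}$ $\meas$-a.e., so in particular $\abs{\nabla f}\le\delta/t$ $\meas$-a.e.\ on the positive-measure set $E_{t,\delta}\subset B_{2\delta}(x)$.

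It remains to see that $[\Delta^{\mathrm{ac}}f]_-$ is small on $E_{t,\delta}$. Suppose, towards a contradiction, that no good sequence existed: there would be $\eps_0,\rho_0>0$ with $\meas$-a.e.\ $y\in B_{\rho_0}(x)$ satisfying $\abs{\nabla f}(y)\ge\eps_0$ or $[\Delta^{\mathrm{ac}}f(y)]_-\ge\eps_0$. Choosing $\delta/t<\eps_0$ small, the first alternative fails on $E_{t,\delta}$, so $[\Delta^{\mathrm{ac}}f]_-\ge\eps_0$ $\meas$-a.e.\ there; since the singular part of $\boldsymbol{\Delta}f$ is non-positive, $\boldsymbol{\Delta}f\le -\eps_0\meas$ on $E_{t,\delta}$ while $\boldsymbol{\Delta}f\le C\meas$ elsewhere. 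Using the ABP lower bound on $\meas(E_{t,\delta})$ near the interior minimiser $x$ --- made quantitative by comparison with the solution of $\Delta v=-\eps_0$, i.e.\ by local solvability and regularity for the Poisson equation (\autoref{re:locsolv}) together with the strong maximum principle on $\RCD$ spaces, equivalently by monitoring the spherical averages $r\mapsto\fint_{\partial B_r(x)}f\ge f(x)$ and the relation $\tfrac{d}{dr}\fint_{\partial B_r(x)}f=c_{K,N}(r)\fint_{B_r(x)}\boldsymbol{\Delta}f$ --- one forces $f$ to fall below its minimal value $f(x)$, a contradiction. Granting that this mechanism can be pushed to yield the quantitatively sharp conclusion, one obtains for every $\eps_0>0$ and small $\delta,t$ a positive-measure set of $y\in E_{t,\delta}$, Lebesgue points of $\abs{\nabla f}$ and of $\Delta^{\mathrm{ac}}f$, with $\abs{\nabla f}(y)\le\delta/t$ and $[\Delta^{\mathrm{ac}}f(y)]_-<\eps_0$; picking $x_n$ in the set corresponding to $t_n\downarrow0$, $\delta_n/t_n\to0$, $\eps_n\downarrow0$ gives $x_n\to x$ with (i)--(iii), and undoing the first reduction (whose perturbation is $O(\delta_n)$) proves the statement for the original $f$.

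I expect the main obstacle to be precisely this last point: the contact argument readily produces points near $x$ with $\abs{\nabla f}$ small and $[\Delta^{\mathrm{ac}}f]_-\lesssim C$, but upgrading $\lesssim C$ to $\to 0$ requires a genuinely quantitative ABP/maximum-principle estimate on the $\RCD(K,N)$ space --- a lower bound on $\meas(E_{t,\delta})$ as a definite fraction of $\meas(B_{2\delta}(x))$ (using the quadratic growth at the minimum and the change-of-variables/Jacobian control for the Hopf--Lax map) together with the comparison forcing $f$ below its minimum --- and possibly a limiting argument as the parameters $(t,\delta)$ degenerate. A secondary difficulty is the regularity bookkeeping: establishing $\nabla f=\nabla\psi_{z(y)}$ and the second-order identity $\Delta^{\mathrm{ac}}f=\tr\Hess\mathcal{Q}_tf$ $\meas$-a.e.\ on the (possibly measure-thin in the relevant directions) contact set demands the fine second-order $\RCD$ calculus and care with the Lebesgue points of the measure $\boldsymbol{\Delta}f$.
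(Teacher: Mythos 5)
Your approach is genuinely different from the paper's and, as you yourself flag, it is incomplete at precisely the step that carries the proof. The paper does not run an ABP/contact-set argument directly. Instead it reduces to a strict minimum exactly as you do, then invokes the perturbed-minimum lemma \cite[Corollary 4.5]{MS22} as a black box: for each $n$ this produces $a_n<1/n$, $y_n$ with $\dist(x,y_n)<1/n$, and a local minimizer $x_n$ of $z\mapsto f(z)+a_n\dist^2(z,y_n)$ with $\dist(x,x_n)<1/n$, chosen so that $x_n$ is simultaneously a Lebesgue point of $\Delta^{\mathrm{ac}}f$ and a harmonic point of $f$ in the sense of \cite[Definition 5.2]{Ambrosioetalembedding}. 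Item (ii) then follows from the two-sided heat-flow computation at $x_n$: local minimality gives $\liminf_{t\to0}(P_tf_n(x_n)-f_n(x_n))/t\ge 0$ by \cite[Lemma 2.56]{MS21}, while \cite[Lemma 2.54]{MS21} together with Laplacian comparison gives $\limsup_{t\to0}(P_tf_n(x_n)-f_n(x_n))/t\le\Delta^{\mathrm{ac}}f(x_n)+C_{K,N}a_n$. Item (iii) follows from blow-up at the harmonic point $x_n$: the blow-up of $f_n$ is a splitting function of slope $|\nabla f|(x_n)$ plus an $O(a_n\dist(x_n,y_n))$-Lipschitz perturbation, and such a sum can have an interior minimum only if the slope is $O(a_n\dist(x_n,y_n))\le 4/n^2$.

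The genuine gap in your proposal is the one you name at the end: you produce contact points near $x$ with $|\nabla f|\le\delta/t$, but you cannot push the upper bound on $[\Delta^{\mathrm{ac}}f]_-$ below a fixed constant of order $C$ without a quantitative ABP estimate. Two problems compound here. First, $\meas(E_{t,\delta})>0$ is not automatic and is not a soft consequence of almost-everywhere single-valuedness of $z\mapsto F_t(z)$: the contact map can collapse (e.g.\ if $f$ has a sharp downward cusp at $x$ the whole of $B_\delta(x)$ maps to a single point), so you need a genuine Jacobian/measure-contraction lower bound, which is exactly the hard analytic content of a Jensen-type lemma in this setting. Second, even granting $\meas(E_{t,\delta})\ge c\,\meas(B_{2\delta}(x))$ for some fixed $c<1$, the configuration $\boldsymbol{\Delta}f\le -\eps_0\meas$ on $E_{t,\delta}$ and $\boldsymbol{\Delta}f\le C\meas$ on the complement does not force the spherical averages of $f$ to dip below $f(x)$: the averaged Laplacian can still be positive unless $c>C/(C+\eps_0)$, which is far from what an ABP-type bound would give when $\eps_0\ll C$. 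So the contradiction mechanism you sketch does not close. Incidentally, the second-order identity ``$\Delta^{\mathrm{ac}}f=\tr\Hess\mathcal{Q}_tf$ on the contact set'' is also not correct as stated: touching from below gives an inequality between Hessians in the viscosity sense, not an identity, and the Hessian that governs the Jacobian of $F_t$ is that of $\mathcal{Q}_tf$, not of $f$.

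In short, what you are attempting to re-derive from scratch is, in effect, the content of \cite[Corollary 4.5]{MS22}; the paper sidesteps all of the quantitative contact-set machinery by using that lemma and then doing a pointwise (rather than measure-level) computation at the good minimum point via the heat semigroup and a blow-up. If you want your route to go through, you would need an actual $\RCD$ analogue of the Jensen/ABP measure estimate for Hopf--Lax contact sets, not merely the bound $\boldsymbol{\Delta}\mathcal{Q}_tf\le\eta_t\meas$ from \autoref{lemma:stabilityhopflax}, and you would need to redesign the contradiction step so that it is driven by the failure of a pointwise inequality at a single good point rather than by a competition between $-\eps_0$ on $E_{t,\delta}$ and $+C$ off it.
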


\begin{proof}
Up to changing $f$ with $f+\eps\dist_x^2$ for $\eps$ arbitrary small we can assume that $f$ has a strict minimum point at $x$. The additional error terms could be handled with arguments analogous to those that we are going to employ below.\\
By the Lebesgue differentiation theorem, $\meas$-a.e.\;point is a Lebesgue point of $\Delta^{\mathrm{ac}}f$. Moreover, by \cite[Theorem 5.4]{Ambrosioetalembedding} (see also \cite{Cheeger99}), $\meas$-a.e. point $x\in\Omega$ is a harmonic point for $f$ according to \cite[Definition 5.2]{Ambrosioetalembedding}. In particular, $x$ is a Lebesgue point of $\abs{\nabla f}$ and any blow-up of the function $f$ at $x$ is a splitting function on a tangent space of $(X,\dist,\meas)$ with slope $\abs{\nabla f}(x)$, unless $\abs{\nabla f}(x)=0$, in which case the blow up is the constant function $0$.\\ 
By \cite[Corollary 4.5]{MS22} for any $n\ge 1$ there exist $a_n<1/n$ and $y_n\in\Omega$ with $\dist(x,y_n)<1/n$ such that the function
\begin{equation}\label{eq:deffn}
f_n(z):=f(z)+a_n\dist^2(z,y_n)
\end{equation}
achieves a local minimum at a point $x_n$ which is a Lebesgue point of $\Delta^{\mathrm{ac}}f$ and a harmonic point for $f$ such that $\dist(x,x_n)<1/n$. We claim that the sequence $(x_n)$ verifies the conclusion in the statement.\\ 
Indeed by local minimality and \cite[Lemma 2.56]{MS21}
\begin{equation}\label{eq:1sign}
\liminf_{t\to 0}\frac{P_tf_n(x_n)-f_n(x_n)}{t}\ge 0\, .
\end{equation}
On the other hand, the Laplacian comparison for $\RCD(K,N)$ spaces and \cite[Lemma 2.54]{MS21} yield
\begin{align}\label{eq:2sign}
\limsup_{t\to 0}\frac{P_tf_n(x_n)-f_n(x_n)}{t}\le &\,  \Delta^{\mathrm{ac}}f(x_n)+a_n\limsup_{t\to 0}\frac{P_t\dist^2(\cdot,y_n)(x_n)-\dist^2(x_n,y_n)}{t}\\
\le &\,  \Delta^{\mathrm{ac}}f(x_n) +C_{K,N}a_n\, ,
\end{align}
where $C_{K,N}$ is a constant depending only on the lower Ricci and upper dimension bounds coming from the Laplacian comparison in \cite{Gigli12}.\\
Combining \eqref{eq:1sign} and \eqref{eq:2sign} we obtain 
\begin{equation}
 \Delta^{\mathrm{ac}}f(x_n)\ge -C_{K,N}a_n\, ,
\end{equation} 
for any $n\ge 1$, and (ii) follows.\\
Moreover, by local minimality again, any blow-up of the function $f_n$ introduced in \eqref{eq:deffn} at $x_n$ has a minimum in the base point of the tangent space. However, any blow-up of $f_n$ at $x_n$ is the sum of a splitting function with slope $\abs{\nabla f}(x_n)$, because $x_n$ is a harmonic point of $f$, and a Lipschitz function with Lipschitz constant bounded above by $2a_n\dist(x_n,y_n)\le 4/n^2$. It is easy to conclude that $\abs{\nabla f_n}(x)\le 4/n^2$ and (iii) immediately follows.
\end{proof}

In \cite{MS21}, the second and third author obtained a sharp Laplacian comparison for the distance function from a locally perimeter minimizing set on $\RCD(K,N)$ metric measure spaces $(X,\dist,\haus^N)$. The restriction on the reference measure being $\haus^N$ can be dropped thanks to the equivalence between different notions of Laplacian bounds obtained in the present note.
\medskip

Let us define the comparison function $\ft_{K,N}: I_{K,N}\to \setR$ as
\begin{equation}\label{eq:deftKN}
\begin{split}
\ft_{K,N}(x)&:=
\begin{cases}
- \sqrt{ K(N-1)} \tan\big(\sqrt{\frac{K} {N-1}} x \big)\,& \quad \text{if } K>0\\
\quad 0\, & \quad \text{if } K=0 \\
\sqrt{-K(N-1)} \tanh\big(\sqrt{\frac{-K} {N-1}} x \big)\,& \quad \text{if } K<0 \;, 
\end{cases} 
\\
I_{K,N}&:=
\begin{cases}
\big( - \frac{\pi}{2} \sqrt{\frac{N-1}{K}},  \frac{\pi}{2} \sqrt{\frac{N-1}{K}}  \big)\,& \quad \text{if } K>0\\
\quad \setR \,& \quad \text{if } K\leq 0 \; .
\end{cases} 
\end{split}
\end{equation}

\begin{theorem}\label{thm:lapdpermin}
Let $(X,\dist,\meas)$ be an $\RCD(K,N)$ metric measure space, for some $K\in \setR$ and $N\in [1,\infty)$. Let $E\subset X$ be a set of locally finite perimeter and assume that it is a local perimeter minimizer. Let $\dist_{\overline{E}}:X\setminus \overline{E}\to[0,\infty)$ be the distance function from $\overline{E}$. Then
\begin{equation}\label{eq:meanglob}
\Delta \dist_{\overline{E}}\le \ft_{K,N} \circ \dist_{\overline{E}}\, \quad\text{on $X\setminus\overline{E}$}\, ,
\end{equation}
where $ \ft_{K,N}$ is defined in \eqref{eq:deftKN}.
If $\Omega\subset X$ is an open domain and $E\subset X$ is locally perimeter minimizing in $\Omega$, then setting
\begin{equation}\label{eq:defK}
{\mathcal K}:=\{x\in X\, :\, \exists\,  y\in \Omega\cap \partial E\, \text{ such that }\, \dist_{\overline{E}}(x)=\dist(x,y)\}\, ,
\end{equation}
it holds
\begin{equation}\label{eq:meangen}
\Delta \dist_{\overline{E}}\le \ft_{K,N}\circ \dist_{\overline{E}}\,\, \quad\text{on any open subset $\Omega'\Subset \left(X\setminus\overline{E}\right)\cap {\mathcal K}$}\, .
\end{equation}

\end{theorem}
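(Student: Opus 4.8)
The plan is to reduce the statement to the known non-collapsed case from \cite{MS21} by a localization argument that exploits the equivalence of Laplacian bounds established above. The main point is that the sharp comparison $\Delta \dist_{\overline{E}}\le \ft_{K,N}\circ\dist_{\overline{E}}$ is a \emph{local} statement, and locally around a point in $(X\setminus\overline{E})\cap\mathcal{K}$ the distance function $\dist_{\overline{E}}$ coincides with $\dist(\cdot,y)$ for some $y\in\partial E$ realizing the distance; so it suffices to understand the behaviour of distance functions to perimeter minimizers. First I would recall that in \cite{MS21} the comparison was proved in the viscosity sense (or equivalently in one of the other senses) on non-collapsed spaces, and that the proof there was essentially based on a blow-up analysis: at a generic point of a minimal boundary the tangent cone splits and the perimeter-minimality passes to the limit, yielding a half-space-like model on which the distance function has the sharp Laplacian bound. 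The key realization is that this blow-up analysis does not actually use the non-collapsing assumption in an essential way once we know that the four notions of Laplacian bounds are equivalent on \emph{general} $\RCD(K,N)$ spaces: the non-collapsing was used in \cite{MS21} precisely to transfer between notions.

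Concretely, the steps I would carry out are: (1) Fix $\Omega'\Subset(X\setminus\overline{E})\cap\mathcal{K}$ and work in the viscosity sense, using \autoref{def:viscosity}. Take a test function $\phi$ touching $\dist_{\overline{E}}$ from below at some $x\in\Omega'$ with $\Delta\phi$ continuous near $x$; I must show $\Delta\phi(x)\le\ft_{K,N}(\dist_{\overline{E}}(x))$. (2) Since $x\in\mathcal{K}$, pick $y\in\Omega\cap\partial E$ with $\dist_{\overline{E}}(x)=\dist(x,y)$. Then $\dist_{\overline{E}}\le\dist(\cdot,y)$ everywhere with equality at $x$, so $\phi$ also touches $\dist(\cdot,y)$ from below at $x$; hence it suffices to prove $\Delta\dist(\cdot,y)\le\ft_{K,N}\circ\dist(\cdot,y)$ in the viscosity sense near $x$, where now $y$ is a point on the perimeter-minimizing boundary. (3) Run the blow-up at $y$: by standard $\RCD$ theory the rescaled spaces converge to a tangent cone, and by the lower-semicontinuity / compactness of perimeter minimizers under measured Gromov-Hausdorff convergence (as in \cite{MS21}), $E$ blows up to a perimeter minimizer in the tangent cone; moreover perimeter minimality forces the tangent cone to split off a line and the blow-up of $E$ to be a half-space in the splitting direction. (4) On this model, $\dist(\cdot,y)$ blows up to the distance from the hyperplane bounding the half-space, i.e.\ (up to sign) the linear coordinate, which is harmonic; combining with the curvature correction from the Bishop-Gromov/Laplacian comparison along geodesics emanating from $y$ (as in \cite{Gigli12}) gives exactly the bound $\ft_{K,N}$. (5) Transfer the infinitesimal information back: use \autoref{def:heatlaplbounds} and the heat-flow characterization (via \cite[Lemma 2.56]{MS21} and the equivalences proven in this note) together with the blow-up estimate to conclude $\Delta\phi(x)\le\ft_{K,N}(\dist_{\overline{E}}(x))$; then invoke the equivalence with the distributional sense to phrase the conclusion as in \eqref{eq:meangen}. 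For the global statement \eqref{eq:meanglob} the set $\mathcal{K}$ is all of $X\setminus\overline{E}$ (every point is joined to $\overline{E}$ by a minimizing geodesic hitting $\partial E$ since $E$ is globally minimizing), so \eqref{eq:meangen} reduces to \eqref{eq:meanglob}, and one patches the local-in-$\Omega'$ bounds using the locality of the distributional Laplacian and \autoref{prop:lsc+Lebesgue}.

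The main obstacle I expect is step (3)–(4): ensuring that the blow-up argument of \cite{MS21} genuinely goes through on a general (possibly collapsed) $\RCD(K,N)$ space. On non-collapsed spaces one has strong structural results (e.g.\ the boundary $\partial E$ is $(N-1)$-rectifiable, tangent cones are metric cones with good measure behaviour, the reduced boundary is well understood via \cite{GDP17} and related work), and these were used to make the perimeter minimizer blow-up clean. In the collapsed setting the measure on tangent cones can be quite degenerate, so I would need to argue that the \emph{viscosity/heat-flow} formulation sidesteps most of this: one only ever needs the estimate at a single touching point $x$, and the relevant object is the behaviour of $\dist(\cdot,y)$ along geodesics from $y$, which is controlled by the one-dimensional Laplacian comparison of \cite{Gigli12} independently of collapsing, plus the extra favourable sign coming from the fact that $y$ lies on a perimeter-minimizing boundary rather than being a generic point. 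So the honest version of the argument is: the \emph{non-sharp} bound $\Delta\dist_{\overline{E}}\le\ft_{K,N}\circ\dist_{\overline{E}}+$ (mean-curvature-type remainder) is the classical Laplacian comparison, and the improvement removing the remainder is where minimality enters via the blow-up; the content of this note is that this improvement, once known in \emph{any} of the four senses, automatically holds in all of them on general spaces, which is exactly what upgrades the \cite{MS21} non-collapsed result. The remaining technical care is the locality bookkeeping: checking that ``$\Delta\phi$ continuous near $x$'' together with $\phi\le\dist(\cdot,y)$ is preserved under the blow-up rescaling, and that the constants in the comparison are the scale-invariant $\ft_{K,N}$ rather than some rescaled version — this is routine but must be done carefully.
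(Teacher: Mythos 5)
Your step (2) contains a genuine and fatal gap. You propose to pass from $\dist_{\overline{E}}$ to $\dist(\cdot,y)$ via the chain $\phi\le\dist_{\overline{E}}\le\dist(\cdot,y)$ with equality at $x$, reducing the problem to showing $\Delta\dist(\cdot,y)\le\ft_{K,N}\circ\dist(\cdot,y)$ in the viscosity sense near $x$. But this inequality is simply false: the Laplacian comparison for the distance from a \emph{point} gives a bound of the form $(N-1)\cot_{K/(N-1)}(\dist(\cdot,y))$, which is strictly larger than $\ft_{K,N}(\dist(\cdot,y))$. Concretely, in $\mathbb{R}^N$ with $K=0$ one has $\Delta\dist(\cdot,y)=(N-1)/\dist(\cdot,y)>0=\ft_{0,N}$. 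The bound $\ft_{K,N}$ is the comparison for the distance to a \emph{minimal hypersurface}, not to a point, and the minimality information you need (that $\partial E$ is a zero-mean-curvature boundary) is entirely lost the moment you replace $\dist_{\overline E}$ by $\dist(\cdot,y)$: all you remember about $y$ is that it realizes the distance, which says nothing about the curvature of $\partial E$ near $y$. The subsequent blow-up analysis in steps (3)--(4) therefore has no chance to recover the sharp constant; indeed step (4) is also incorrect as stated, since the blow-up of $\dist(\cdot,y)$ at $y$ sees the distance from the \emph{apex} of the tangent cone, which is the radial coordinate, not the linear coordinate of the splitting.

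The paper's proof takes a quite different and much more modest route. It does not rerun a blow-up; rather it goes through the existing proof of~\cite[Theorem 5.2]{MS21} and observes that, after replacing the test function $\psi$ in Step~3 of that proof by $\hat\psi:=\psi-\delta\dist_x^2$, the argument carries over verbatim to general (possibly collapsed) $\RCD(K,N)$ spaces. The only price to pay is that $\hat\psi$ is no longer in the domain of the Laplacian, but it still has a measure-valued distributional Laplacian bounded from below (by the Laplacian comparison), and the upper bound on $\boldsymbol{\Delta}\hat\psi$ was never used in~\cite{MS21}. You are correct that the conceptual point allowing this is the equivalence of the various notions of Laplacian bound established in this note, but the mechanism is the opposite of what you describe: it is not that the old proof is rederived via a clean blow-up on collapsed spaces, it is that the old proof never actually needed non-collapsing once the notions are known to be equivalent, and one only needs to check that the test functions used there can be chosen appropriately. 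Your ``honest version'' paragraph gestures in this direction, but it does not repair the flawed reduction in step (2).
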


\begin{proof}
The proof requires only a very minor adjustment with respect to the one of \cite[Theorem 5.2]{MS21}. Namely, borrowing the notation introduced therein, in Step 3 of the proof in \cite{MS21} we set
\begin{equation}
\hat{\psi}:=\psi-\delta \dist_x^2\, .
\end{equation}
All the properties from (i') to (iv') of $\hat{\psi}$ remain valid with the following two exceptions: 
\begin{itemize}
\item[a)] the function $\hat\psi$ does not belong to the domain of the Laplacian. However, it has measure valued Laplacian locally bounded from below by the Laplacian comparison for $\RCD(K,N)$ spaces;
\item[b)] only the lower Laplacian bound $\boldsymbol{\Delta}\hat{\psi}>\eps'$ holds in (iv') and it has to be intended in the sense of distributions.
\end{itemize}
Analogous comments hold for the global extension $\overline{\psi}$ of $\hat{\psi}$. We remark that the upper Laplacian bound on $\overline{\psi}$ played no role in the subsequent arguments in \cite{MS21} that can be carried over without further modifications in the present setting.
\end{proof}

\bibliographystyle{siam}
\bibliography{biblio}

\def\cprime{$'$} \def\cprime{$'$}

\end{document}